\theoremstyle{plain}
\newtheorem{proposition}{Proposition}[section]
\newtheorem{lemma}[proposition]{Lemma}
\newtheorem{corollary}[proposition]{Corollary}
\newtheorem{theorem}[proposition]{Theorem}
\theoremstyle{remark}
\newcommand{\PP}{\mathbb{P}}
\newcommand{\C}{\mathbb{C}}
\newcommand{\Q}{\mathbb{Q}}
\newcommand{\Z}{\mathbb{Z}}
\newcommand{\QQ}{\mathcal{Q}}
\newcommand{\DD}{\mathcal{D}}
\newcommand{\XX}{\mathcal{X}}
\newcommand{\OO}{\mathcal{O}}
\newcommand{\UU}{\mathcal{U}}
\newcommand{\HH}{\mathcal{H}}
\newcommand{\HHH}{\mathbb{H}}
\newcommand{\VV}{\mathcal{V}}
\newcommand{\NS}{\mathrm{NS}}
\newcommand{\NL}{\mathrm{NL}}
\newcommand{\KK}{\mathcal{K}}
\newcommand{\aaa}{\mathbf{a}}
\newcommand{\bbb}{\mathbf{b}}
\newcommand{\ccc}{\mathbf{c}}
\newcommand{\sing}{\mathrm{sing}}
\newcommand{\sm}{\mathrm{smooth}}
\titleformat*{\section}{\large\scshape}
\titleformat*{\subsection}{\scshape}
\titleformat*{\subsubsection}{\scshape}
\title{\normalsize{\textbf{ONE-DIMENSIONAL LOCAL FAMILIES OF COMPLEX K3 SURFACES}}}
\author{\normalsize{\textsc{Riccardo Carini \ \& \ Francesco Viganò}}}
\date{}
\begin{document}
\sloppy

\maketitle

\begin{abstract}
	\noindent\small{For any complex K3 surface $X$, we construct a one-dimensional deformation in which all integers $\rho$ with $0 \leqslant \rho \leqslant 20$ occur as Picard numbers of some fibres. In contrast, we prove that the \textit{generic} one-dimensional local family of K3 surfaces admits only $0$ and $1$ as Picard numbers of the fibres.}
\end{abstract}

We consider families of complex K3 surfaces $\mathcal{X} \to S$ over a connected base $S$, and the variation of the Picard number $\rho(\XX_t)$ of the fibre $\XX_t$, i.e. the rank of the Néron-Severi group $\NS(\XX_t)$. For a complex K3 surface $X$, $0\leqslant \rho(X)\leqslant 20$, and $\rho(X)\geqslant 1$ if $X$ is algebraic or, equivalently, projective. Given a family $\XX \to S$, define $\rho_{\min}$ as
\[
\rho_{\min}=\min \Set{ \rho(X_t) | t\in S }.
\]
Then, the Noether-Lefschetz locus
\[
\NL(\XX/S)=\Set{ t\in S | \rho(X_t)>\rho_{\min} }
\]
is a dense subset of the base if the family is not isotrivial \cite[Theorem 1.1]{Og}. Moreover, it is a countable union of subvarieties of $S$ of positive codimension -- in particular, $\NL(\XX/S)$ is countable if $S$ is one-dimensional. As a consequence, any family with constant Picard number is necessarily isotrivial (see \cite[Theorem 1.1]{SB} for a weaker original version). It is interesting to understand the behaviour of the Picard number of the fibres of non-isotrivial families: not only where it jumps, but also how to quantify its jump $\rho-\rho_{\min}$. Similar questions were also studied in the context of arithmetic specializations, the base being $\mathrm{Spec} (\OO_E)$ for some number field $E$ (see for instance \cite{Cha}).\\

The universal family of marked K3 surfaces -- or, in the local case, the Kuranishi family of any given K3 surface \cite[Application 1.3]{Og} -- provides an example where each possible Picard number $\rho$ with $0\leqslant\rho\leqslant20$ is attained. However, the base $S$ that parametrises such families is $20$-dimensional. Several known examples (see \textsection \ref{section.examples}) show that the set of Picard values can be very limited. Nevertheless, these families are somehow special, and one may wonder how the Picard number is altered in a general case. We prove that any K3 surface admits a one-dimensional deformation with the same property (Theorem \ref{theorem.result}):

\begin{theorem}\label{theorem.result0.1}
	Given a K3 surface $X$, there is a one-dimensional deformation $\mathcal{X}\to\Omega$ of $X$ with the property that any integer $\rho$ with $0\leqslant\rho\leqslant20$ is realised as the Picard number of a fibre.
\end{theorem}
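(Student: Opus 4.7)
The plan is to realise the required family as a one-dimensional slice of the Kuranishi family of $X$, exploiting the fact that the latter already attains every Picard number in $\{0, 1, \ldots, 20\}$. Concretely, let $\pi\colon \mathcal{Y} \to T$ denote the Kuranishi family; then $T$ is smooth of complex dimension $20$ (K3 deformations being unobstructed), and by local Torelli the period map identifies $T$ with an open neighbourhood of $[H^{2,0}(X)]$ inside the period domain $D_\Lambda \subset \PP(\Lambda \otimes \C)$. Oguiso's Application 1.3, cited above, ensures that for each $\rho \in \{0, 1, \ldots, 20\}$ there exists $t_\rho \in T$ with $\rho(\mathcal{Y}_{t_\rho}) = \rho$. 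It therefore suffices to produce a connected one-dimensional analytic slice of $T$ passing through the origin and through a chosen witness point for each Picard rank, and then pull $\mathcal{Y}$ back along it.

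To carry out this reduction, I would first pick witnesses $t_\rho \in T$ for each $\rho \in \{0, 1, \ldots, 20\}$, arranged so that $t_{\rho(X)} = 0$ corresponds to $X$ itself. Fixing a biholomorphism $T \cong \Delta^{20}$ with a polydisc and writing $t_\rho = (t_\rho^{(1)}, \ldots, t_\rho^{(20)})$, choose $21$ distinct values $z_0, \ldots, z_{20}$ in a disc and, coordinate by coordinate, apply Lagrange interpolation to obtain polynomials $\phi_1, \ldots, \phi_{20}$ of degree at most $20$ with $\phi_j(z_\rho) = t_\rho^{(j)}$. A mild rescaling of the variable $z$ keeps the image of $\phi = (\phi_1, \ldots, \phi_{20})$ inside $\Delta^{20} \cong T$, so $\phi \colon \Omega \to T$ is a non-constant holomorphic map from a disc $\Omega$ that hits every $t_\rho$. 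Setting $\XX := \phi^* \mathcal{Y}$, the pullback $\XX \to \Omega$ is the desired one-dimensional deformation of $X$: the fibre over $z_\rho$ has Picard number $\rho$ by construction.

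The conceptual weight of the argument is thus absorbed by Oguiso's input, namely that the full $20$-dimensional Kuranishi family already realises each Néron-Severi rank. Granting this, the reduction to a one-dimensional base is a soft piece of complex analysis --- polynomial interpolation inside a Stein polydisc --- and presents no genuine obstacle. The real sharpness of the result comes from the contrast with its companion statement (that the \emph{generic} one-dimensional local family admits only Picard numbers $0$ and $1$): the curve $\Omega$ constructed here is necessarily very special, tailored precisely so that its image in $T$ pierces a Noether-Lefschetz stratum of each admissible codimension between $0$ and $20$.
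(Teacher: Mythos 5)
Your overall route is the same as the paper's: produce points of every Picard number in the base of the Kuranishi family (equivalently, via local Torelli, in the period domain), join them by a Lagrange interpolating curve, and pull the family back. The step you dismiss as ``a mild rescaling of the variable $z$'' is, however, exactly where the argument needs care, and as written it fails. Write $\phi(z)=\sum_{\rho}p_{\rho}(z)\,t_{\rho}$ with $p_{\rho}$ the Lagrange basis polynomials for the nodes $z_{0},\dots,z_{20}$. Your domain $\Omega$ must contain all $21$ nodes, since you need the fibres over them. Substituting $z\mapsto\lambda z$ merely reparametrises the curve and moves the nodes to $z_{\rho}/\lambda$: the image of $\phi$ over the region spanned by the nodes is invariant under any affine change of the variable, so no rescaling of $z$ can force that image into $\Delta^{20}$. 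And with the witnesses $t_{\rho}$ chosen arbitrarily in $\Delta^{20}$, a degree-$20$ interpolant genuinely does overshoot between nodes (the sup of $\sum_{\rho}\lvert p_{\rho}\rvert$ over the convex hull of $21$ equispaced nodes is enormous), so the claim is not just unjustified but false in general. Rescaling the target instead is not available, since that would move the $t_{\rho}$ off their Noether--Lefschetz strata.

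The paper resolves this by reversing the order of quantifiers: it first fixes the nodes $0,\dots,20$ and the constant $M=\max_{i}\max_{t\in[0,20]}\lvert p_{i}(t)\rvert$, and only then chooses the $21$ witness points inside the ball of radius $1/(21\cdot M)$ centred (in a chart) at the period point of $X$, so that $\lvert\phi\rvert<1$ on $[0,20]$ and hence on a neighbourhood $\Omega$. This requires the sharper, local input that every Picard number $0,\dots,20$ is realised in \emph{every} open subset of the period domain --- the paper's Lemma \ref{lemma.points}, proved by a recursive hyperplane-section argument on the period quadric --- whereas you only invoke the existence of witnesses somewhere in $T$. Your proposal becomes correct once you (i) quote or prove this local density statement and (ii) place all witnesses in a ball whose radius is chosen after the interpolation nodes, rather than trying to repair the overshoot afterwards.
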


\begin{center}
	\includegraphics[scale=0.4]{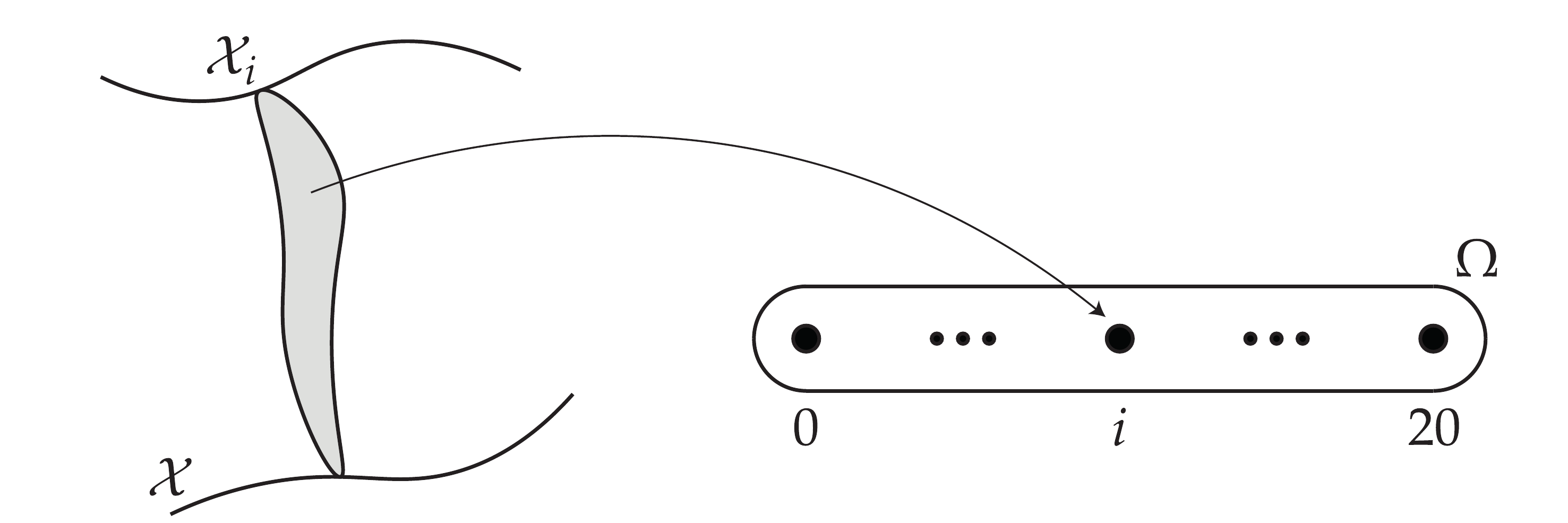}
\end{center}

The one-dimensional base $\Omega$ is, in our construction, an open neighbourhood in $\C$ of the real interval $[0,20]$, and $\rho(\XX_i)=i$ for each $i=0,\dots,20$. The original $X$ is the fibre $\XX_i$ for $i=\rho(X)$. If the original K3 surface $X$ is algebraic, one can also build a deformation whose fibres are all algebraic, for which all integers between $1$ and $20$ are attained as Picard numbers of some fibres (Theorem \ref{theorem.resultalgebraic}).\\

The strategy for constructing this family is essentially to find a suitable curve through points in a moduli space corresponding to K3 surfaces with the desired Picard numbers. The interpolation process is first carried out in the K3 period domain -- that is, at a Hodge-theoretical level  --, and then lifted to a moduli space via Torelli.\\

However, the family we construct is somehow special, as we prove that for most one-dimensional local families, the range of possible Picard numbers is very limited (Theorem \ref{theorem.01rho}):

\begin{theorem}\label{theorem.result0.2}
	The generic one-dimensional local family of K3 surfaces admits only Picard numbers $0$ and $1$.
\end{theorem}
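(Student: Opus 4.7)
The plan is to use the period map to translate the problem into the incidence geometry of a curve with Noether-Lefschetz divisors inside the period domain, and then to extract the generic behaviour via a dimension count and a Baire category argument. By the local Torelli theorem, I identify the Kuranishi space $\Def(X)$ of a K3 surface $X$ with an open neighbourhood $U$ of its period point in $\Omega \subset \PP(\Lambda_{K3} \otimes \C)$, so that a one-dimensional local family of K3 surfaces corresponds to a germ of holomorphic curve $C \subset U$. For every primitive non-zero $v \in \Lambda_{K3}$, the hyperplane section $\Delta_v = \Set{ [\omega] \in \Omega | \langle \omega, v \rangle = 0 }$ has codimension one in $\Omega$ (whenever non-empty), and the Picard number at any $t \in U$ equals the rank of the sublattice $\Set{ v \in \Lambda_{K3} | t \in \Delta_v }$. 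In particular, $\rho(\XX_t) \geqslant 2$ if and only if $t$ lies in $\Delta_{v_1} \cap \Delta_{v_2}$ for some pair of linearly independent primitive classes $v_1, v_2$.

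To make the notion of "generic" precise, I parametrise first-order germs of curves by the $39$-dimensional variety $\PP(TU)$ of pairs $(p, [w])$, which determines an affine line germ through $p$ in direction $[w]$. For each fixed pair $(v_1, v_2)$ of linearly independent primitive classes in $\Lambda_{K3}$, the intersection $\Delta_{v_1} \cap \Delta_{v_2}$ has codimension two in $\Omega$, hence dimension $18$. The incidence variety of pairs $(p, [w])$ whose associated line passes through some $z \in \Delta_{v_1} \cap \Delta_{v_2}$ fibres over $z$ with fibres of dimension $20$ (each $p \neq z$ determines a unique direction $[w] = [z-p]$), so the total incidence dimension is at most $18 + 20 = 38$, and its image in $\PP(TU)$ has codimension at least one. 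Since $\Lambda_{K3}$ is countable, the union of all such bad sets is a countable union of proper analytic subsets of $\PP(TU)$, whose complement is dense by Baire category. Any pair $(p, [w])$ in this complement yields a germ of line whose image in $U$ avoids every codimension-two Noether-Lefschetz intersection; no fibre of the corresponding family then has Picard number at least two. Combining this with the easier fact that a generic line is not contained in any single $\Delta_v$, one concludes that the generic fibre has Picard number zero, whereas Picard number one is realised on a countable dense set thanks to the density theorem \cite[Theorem 1.1]{Og} recalled above.

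The main obstacle is to pass from linear germs to arbitrary holomorphic germs of curves, since a curve could in principle meet $\Delta_{v_1} \cap \Delta_{v_2}$ at higher-order points even when its tangent line does not. This is handled by enlarging the parameter space to $N$-jets of holomorphic maps $(\mathbb{D}, 0) \to (U, p)$, a finite-dimensional variety whose dimension grows linearly in $N$; for each pair $(v_1, v_2)$, the condition that the image of an $N$-jet meets $\Delta_{v_1} \cap \Delta_{v_2}$ still cuts out a subset of codimension at least one, since the image of the jet remains one-dimensional while $\Delta_{v_1} \cap \Delta_{v_2}$ has codimension two in $U$. Passing to the inverse limit (or, equivalently, applying Baire category directly to the Fréchet space of holomorphic germs), the generic germ of curve avoids every codimension-two Noether-Lefschetz intersection, and the theorem follows.
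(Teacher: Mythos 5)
Your overall strategy is the same as the paper's: use local Torelli to replace a family by a holomorphic curve in the period domain, identify the locus where $\rho\geqslant 2$ with a countable union of codimension-two intersections of rational hyperplanes with the period quadric, and conclude by a Baire category argument. One point you pass over too quickly is the codimension claim itself: since the period domain is an open subset of the quadric $\mathcal{Q}$, one must check that no irreducible component of $\mathcal{Q}\cap H_{\mathbf{a}}$ is contained in $H_{\mathbf{b}}$; this is exactly Lemma \ref{lemma.codim2} in the paper and uses a rank computation valid for $n\geqslant 4$. That is a fixable omission.

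The genuine gap is at the crucial step: showing that, for a fixed codimension-two analytic subvariety $M$, the set of curves whose image meets $M$ is nowhere dense in the relevant function space. Your dimension count for line germs in $\PP(TU)$ is correct as far as it goes, but the proposed passage to arbitrary holomorphic curves via $N$-jets and an ``inverse limit'' does not work: whether the image of a curve meets $M$ is not a condition determined by any finite jet (a curve can meet $M$ while every finite Taylor truncation misses it, and conversely), so the finite-level codimension-one conditions do not form an inverse system whose limit is the bad set. Nor can one ``apply Baire category directly to the space of germs'', which with its natural topology is not a Baire space. The paper resolves this by working in $\mathcal{H}=\operatorname{Hol}(\Delta,B)\cap\mathcal{C}(\overline{\Delta},B)$ with the sup-norm, where the bad set $\mathcal{G}_M$ is closed by compactness of $\overline{\Delta}$, and by proving density of the complement through the parametric transversality theorem applied to the translates $\tau(t)+v$: for almost every small $v$ the translate is transversal to $M$, hence disjoint from it because $M$ has real codimension $4>2$. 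This in turn forces a separate treatment of the singular locus $M_{\mathrm{sing}}$ (transversality only makes sense relative to smooth submanifolds, whence the smoothness statement in Corollary \ref{bad_locus}) and of the boundary circle $\partial\Delta$; your proposal addresses neither. Without an argument of this kind, the central claim that avoiding each fixed $M$ is a dense condition on curves remains unproven.
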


By \textit{one-dimensional local family} we mean (as in \cite{Og}) a holomorphic family parametrised by an open one-dimensional disk. Hence, we regard the space of such one-dimensional local families as an open subset of a complete metric space, namely the space of holomorphic maps from the unit disk to a suitable open subset of $\mathbb{C}^{20}$. We say that a property holds \textit{generically} if it holds in the complement of a nowhere dense subset (see \textsection \ref{section.vg} for a more precise treatment). Similarly, if we only consider families with algebraic fibres, the generic one-dimensional local family has Picard numbers $1$ and $2$. In a similar fashion, our results can be adapted to hyperk\"ahler manifolds.\\

\textbf{Acknowledgements.} The authors are grateful to Richard Thomas for many valuable discussions and his continuous support. They also thank Daniel Huybrechts for useful conversations that motivated this project.

This work was supported by the Engineering and Physical Sciences Research Council [EP/S021590/1]. The EPSRC Centre for Doctoral Training in Geometry and Number Theory (The London School of Geometry and Number Theory), University College London. The authors are PhD candidates at Imperial College London.

\section{Examples of constrained variation of $\rho(\XX_t)$}\label{section.examples}

Before moving towards our result, we discuss one of the reasons that motivates our interest in one-dimensional families. In fact, in many examples the Picard number does not swipe out a full range of integers, and is constrained to a few possible values. We illustrate this phenomenon in various scenarios.\\

For a complex number $\tau \in \HHH$, denote by $E_\tau$ the correspondent elliptic curve, and by $\KK(\tau, \tau')$ the Kummer surface associated with the abelian surface $E_\tau \times E_{\tau'}$. Then (see \cite[Chapter 17, \textsection 1.4]{Huy}),
\[
\rho(\KK(\tau,\tau'))=\begin{cases}
	18 & \text{ if }E_{\tau}\nsim E_{\tau'},\\
	19 & \text{ if }E_{\tau}\sim E_{\tau'}\text{ without CM},\\
	20 & \text{ if }E_{\tau}\sim E_{\tau'}\text{ with CM},
\end{cases}
\]
where we write $E_{\tau}\sim E_{\tau'}$ if the curves are isogenous. Indeed, $\rho(\KK(\tau,\tau'))\geqslant16$ as $\text{NS}(K(\tau,\tau'))$ contains the classes of the 16 exceptional divisors coming from the Kummer construction. Moreover, the fibres of the two projections onto $E_\tau$ and $E_{\tau'}$ provide two extra classes, so that $\rho(K(\tau,\tau'))\geqslant 18$. Finally, if $E_{\tau}\sim E_{\tau'}$, the graph of an integer multiplication and -- possibly, if $E_\tau$ is CM -- a complex multiplication give the remaining classes. Explicitly, we can consider three families:

\begin{itemize}
	\item Fix $\zeta\in\HHH$ with $[\mathbb{Q}(\zeta):\mathbb{Q}]>2$, so that $E_{\zeta}$ is not CM. The one-dimensional family $\KK(\tau,\zeta)$ has $\rho_{\min}=18$, and $\rho(\KK(\tau,\zeta))=19$ for a dense and countable subspace of values of $\tau$, while $20$ is never attained. To be precise, the Picard number jumps if and only if $\tau \in \Q \oplus \Q \zeta$, that is dense and countable in the base.
	\item Let $\tau=\tau'$, and consider the one-dimensional family $\KK(\tau,\tau)$. The fibre has Picard number $19$ or $20$, when $E_\tau$ is not or is CM, respectively. Again, the Picard number jumps over a dense and countable subset of the base.
	\item The family $\KK(\tau,i)$ has $\rho_{\min}=18$ and $\rho(X_{\tau})=20$ for $\tau\in\mathbb{Q}(i)$, while $19$ is not realised. Notice that, again, the Noether-Lefschetz locus is countable and dense in the base.
\end{itemize}

In fact, the latter instance falls under a more general picture. In a family $\XX \to S$ of algebraic K3 surfaces, the endomorphism fields $K_{T(\XX_t)}$ of the transcendental lattices $T(\XX_t)$ can be compared with each other via the Zarhin embedding $K_{T(\XX_t)} \hookrightarrow \C$ (see \cite{Zar}, or \cite[Chapter 3, \textsection 3.3]{Huy}, for the description of the embedding). If $K$ is a subfield of each of the $K_{T(\XX_t)}$, then the Picard number can only jump by multiples of $[K:\Q]$ (see \cite[Chapter 17, Remark 1.4]{Huy}). In the specific case of $\KK(\tau,i)$, the CM map given by the multiplication by $i$ on $E_i$, that acts as the multiplication by $i$ on $H^{1,0}(E_{i})$ as well, lifts to the associated Kummer surface, and acts as multiplication by $i$ on $H^{2,0}(\KK(\tau,i))$, for any $\tau$. Therefore, the Picard jump is a multiple of $[\Q(i):\Q]=2$.\\

We now present another example: the twistor space of a K3 surface (see \cite[Section 3.F]{HKLR}, \cite{Hit} or \cite[Chapter 7]{Joyce} for more on the twistor space and the details of its construction). Given a complex projective K3 surface $X$ (and a choice of an ample class), its twistor space is a 3-fold $\mathcal{X}$, together with a natural holomorphic projection $\mathcal{X}\to\mathbb{P}^{1}\simeq S^{2}$. The fibres are again K3 surfaces, and only countably many of them are algebraic. The set of possible Picard numbers is very constrained: most fibres have Picard number $\rho_{\min}=\rho(X)-1$, and outside the equator the Picard number jumps only to $\rho(X)$, on a countable and dense subset \cite[Proposition 3.2]{Huy}. On the equator, it can only jump by multiples of $[K_{T}^{0}:\Q]$, where $K_{T}$ is, as above, the endomorphism field of $T(X)$, and $K_{T}^{0}=K_{T}\cap\mathbb{R}$. In particular, if $X$ is CM, the only possible Picard values of the twistor fibres are $\rho(X)-1,\rho(X)$ and $10+\frac{\rho(X)}{2}$ \cite[Theorem 5.3 and Remark 3.7]{Vig}. More generally, Huybrechts \cite{HuyBr} describes the behaviour of the Picard number in a specific class of deformations, named brilliant families of K3 surfaces. Among these, (the upper half sphere of) the twistor line, the Brauer family, and the Dwork pencil. The possible values are only $\rho(X)$ and $\rho(X)-1$, where $\rho(X)$ is the Picard number of the central fibre.

\section{The K3 lattice $\Lambda$}

All K3 surfaces are diffeomorphic to each other, and therefore have the same cohomology groups. A marking on a K3 surface $X$ is a choice of isomorphism of lattices $H^2(X,\Z) \simeq \Lambda$, which then respects the intersection form $( \ . \ )$. If $\sigma$ is a $(2,0)$-form on a K3 surface, the corresponding period point is defined as $[\sigma] \in \PP(\Lambda \otimes \C)$. However, as $\sigma$ is constrained by some conditions, the period point lies in the period domain
\[
\DD= \Set{ [\sigma] \in \PP(\Lambda \otimes \C) | (\sigma.\sigma)=0, \ (\sigma.\overline{\sigma})>0 },
\]
that is an open subset of the quadric $\QQ$ defined by the condition $(\sigma.\sigma)=0$.\\

Assume that $\{\gamma_j\}_{j=1}^{22}$ is a $\Z$-basis of $H^2(X,\Z)$ that gives the marking. Let $\{\gamma_j^\vee\}_{j=1}^{22} \subseteq H^2(X,\Q)$ be the correspondent $\Q$-dual basis of $H^2(X,\Q)$ with respect to the intersection form $( \ . \ )$. Note that $\sigma=\sum_{j=1}^{22} (\sigma.\gamma_j) \gamma_j^\vee$. Then, the dual basis gives an isomorphism $\PP(\Lambda \otimes \C) \simeq \PP^{21}_\C$ for which the point $[\sigma]$ corresponds to the coordinate vector of its periods
\[
[ (\sigma.\gamma_1) : \dots : (\sigma.\gamma_{22}) ] \in \PP^{21}_\C.
\]
We call this vector $v$, so that $v^j=(\sigma.\gamma_j)$. The index notation we use for projective coordinates in $\PP^{21}_\C$ is $[v^1 : \dots : v^{22}]$, rather than $[v^0 : \dots : v^{21}]$. \\

For such a vector $v$, we define
\[
\chi(v)=\dim_{\Q} \langle v^j \rangle^{j=1,\dots,22}_{\Q}
\]
The relation between the Picard number of a K3 surface and the $\chi$-value of its period point is here made explicit.

\begin{lemma}\label{lemma.rhochi}
	Let $v \in \DD$ be the period point of the K3 surface $X$. Then
	\[
	\rho(X)=22-\chi(v).
	\]
\end{lemma}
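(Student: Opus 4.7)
The plan is to realise both $\rho(X)$ and $\chi(v)$ as the $\Q$-dimensions of the kernel and image of a single $\Q$-linear map, and then invoke rank-nullity. The map in question is
\[
L : H^{2}(X, \Q) \longrightarrow \C, \qquad \alpha \longmapsto (\alpha.\sigma),
\]
where $H^{2}(X,\Q)$ is a $22$-dimensional $\Q$-vector space and $\C$ is regarded as a (infinite-dimensional) $\Q$-vector space.

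First I would identify the image. Expanding $\alpha = \sum_{j} a_{j} \gamma_{j}$ in the integral basis gives $L(\alpha) = \sum_j a_j (\sigma.\gamma_j) = \sum_j a_j v^j$. Hence $\mathrm{im}(L) = \langle v^1, \dots, v^{22} \rangle_{\Q} \subseteq \C$, whose $\Q$-dimension is $\chi(v)$ by definition. Note that the freedom to rescale $\sigma$ (equivalently, the fact that $v$ is only defined up to a nonzero scalar) does not affect this dimension, so $\chi(v)$ is well-defined on $\DD$.

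Next I would identify the kernel with $\NS(X) \otimes \Q$. Since $X$ is a K3 surface, $H^{2,0}(X)$ is one-dimensional and spanned by $\sigma$, and $h^{1,0}(X)=0$. A rational class $\alpha \in H^{2}(X,\Q)$ is of Hodge type $(1,1)$ if and only if it is orthogonal to both $\sigma$ and $\bar\sigma$; for a rational (hence real) class the two conditions are conjugate and therefore coincide, so $\ker(L) = H^{2}(X,\Q) \cap H^{1,1}(X)$. By the Lefschetz $(1,1)$-theorem this equals $\NS(X) \otimes \Q$, which has dimension $\rho(X)$.

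Putting these two identifications together, rank-nullity yields $\rho(X) + \chi(v) = 22$, which is the claimed equality. I do not expect a genuine obstacle in this argument; the only point that deserves attention is the identification of $\ker(L)$ with the rational $(1,1)$-part, which rests precisely on $\sigma$ spanning $H^{2,0}$ and on $\alpha$ being real so that orthogonality to $\sigma$ and to $\bar\sigma$ are equivalent.
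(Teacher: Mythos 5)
Your proof is correct and follows essentially the same route as the paper: both identify $\NS(X)\otimes\Q$ with the kernel of the $\Q$-linear map $\alpha\mapsto(\alpha.\sigma)$ via the Lefschetz $(1,1)$-theorem and conclude by rank--nullity, with the image being the $\Q$-span of the periods. You merely spell out more explicitly why orthogonality to $\sigma$ alone suffices for a real class, which the paper leaves implicit.
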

\begin{proof}
	The Picard number of $X$ is the rank of its Néron-Severi group, or equivalently
	\[
	\rho(X)= \dim_\Q \NS(X) \otimes \Q = \dim_\Q \Set{ \gamma \in \Lambda \otimes \Q | (\sigma.\gamma)=0 }
	\]
	from Lefschetz $(1,1)$-theorem (see \cite[Chapter 1, Section 2]{GH}). Thus, the equality $\rho(X)=22-\chi(v)$ follows immediately from rank-nullity theorem.
\end{proof}

\section{A special deformation}

Following Lemma \ref{lemma.rhochi}, we are led to the search of $21$ points in $\DD$, one of which coinciding with the period point of the original K3, whose $\chi$-values cover all integer numbers between $2$ and $22$.

\begin{lemma}\label{lemma.points}
	For any open $\UU$ in $\DD$, there exist $21$ points $\{v_i\}_{i=0}^{20}$ in $\UU$ satisfying $\chi(v_i)=22-i$.
\end{lemma}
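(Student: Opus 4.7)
The plan is to reduce the lemma to a density claim: for each $i$ with $0 \leqslant i \leqslant 20$, the subset
\[
\N_i = \Set{ v \in \DD | \chi(v) = 22 - i }
\]
is dense in $\DD$; granted this, one simply picks $v_i \in \N_i \cap \UU$ for each $i$. The case $i=0$ is immediate from Lefschetz's $(1,1)$-theorem and Baire's theorem: by Lemma \ref{lemma.rhochi}, $v \notin \N_0$ exactly when $(\sigma.\gamma) = 0$ for some non-zero $\gamma \in \Lambda$, so the complement of $\N_0$ is the countable union $\bigcup_{\gamma \neq 0}(\DD \cap \gamma^\perp)$ of proper closed complex analytic subvarieties of the connected complex manifold $\DD$, hence nowhere dense.

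For $1 \leqslant i \leqslant 19$, I would argue as follows. Using the splitting $\Lambda \simeq U^{\oplus 3} \oplus E_8(-1)^{\oplus 2}$, one exhibits, for every such $i$, a primitive negative-definite sublattice $L \subset \Lambda$ of rank $i$; its orthogonal $L^\perp$ has signature $(3, 19-i)$, so the sub-period-domain $\DD_L = \DD \cap \PP(L^\perp \otimes \C)$ is a non-empty connected complex submanifold of $\DD$ of codimension $i$. Inside $\DD_L$, the same Baire argument as in the $i = 0$ case shows that the locus $\N_i \cap \DD_L$ -- the complement, within $\DD_L$, of the countable union of proper analytic hyperplane sections $\DD_L \cap \gamma^\perp$ for integer classes $\gamma \notin L$ -- is dense in $\DD_L$. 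It then remains to see that $\bigcup_L \DD_L$ is dense in $\DD$, so that some $\DD_L$ meets $\UU$; this is proved by iterating Ogasawara's theorem \cite[Theorem 1.1]{Og}, applied first in $\DD$ to produce one orthogonal integer class $\gamma_1$, then inside the resulting sub-period-domain (itself a period domain for $\gamma_1^\perp$, a K3-type lattice of signature $(3, 18)$) to produce a second independent class, and so on up to $i$ iterations.

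The extremal case $i = 20$ corresponds to singular (equivalently, CM) K3 period points, and amounts to their classical density in $\DD$: starting from any $\UU$ one iterates Ogasawara $19$ times to reach a one-dimensional sub-period-domain $\DD_{L'}$ meeting $\UU$, for $L'$ primitive negative-definite of rank $19$, and then invokes the density of CM points in this one-dimensional period domain -- an analogue of the density of CM points in the upper half plane $\HHH$.

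The main obstacle is controlling the signature through the iterated application of Ogasawara's theorem: at each step, the new class adjoined to $L$ must be chosen so that the updated $L^\perp$ remains a K3-type lattice of signature $(3, 19-k)$, ensuring Ogasawara's hypotheses continue to hold. This is essentially a bookkeeping step, guaranteed by the richness of the lattice $\Lambda$, but it must be handled with care.
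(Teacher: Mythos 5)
Your reformulation of the lemma as the density of each $\N_i=\Set{v\in\DD | \chi(v)=22-i}$ in $\DD$ is exactly equivalent to the statement, and your overall mechanism --- impose $i$ independent rational classes, then use Baire to avoid all further ones --- is the same one the paper uses. The difference lies in how the $i$ classes are produced, and this is where your write-up has its soft spot. The paper cuts $\UU$ recursively by rational hyperplanes $H_{\aaa_1},\dots,H_{\aaa_i}$, requiring of each $\aaa_k$ only that $(\aaa_k.\aaa_k)\neq 0$: this keeps the successive sections $\QQ\cap H_{\aaa_1}\cap\dots\cap H_{\aaa_k}$ smooth quadrics of the expected dimension, which is all that is needed to find, at the next stage, a further rational hyperplane cutting the (still non-empty, open) intersection with $\UU$. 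No control of the signature of $\langle \aaa_1,\dots,\aaa_k\rangle$ is required, and the same recursion runs uniformly up to $i=20$, where the final point automatically has $\chi=2$ because $\chi\geqslant 2$ everywhere on $\DD$. You instead insist that the imposed classes span a negative-definite lattice $L$, so that $\DD_L$ is again a K3-type period domain; this is a genuinely stronger demand, and it is precisely what creates the two issues you then have to confront: (a) the ``bookkeeping'' of choosing each new class with negative square, which you flag but do not carry out, and (b) the breakdown at $i=20$, where no negative-definite rank-$20$ sublattice of $\Lambda$ exists (for $\rho=20$ the N\'eron-Severi lattice has signature $(1,19)$), forcing a separate appeal to the density of CM/singular periods.

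Point (a) is the only real gap, and it is fillable: at stage $k\leqslant 19$ the orthogonal complement of the positive $2$-plane $\langle\operatorname{Re}\sigma,\operatorname{Im}\sigma\rangle$ inside $L_{k-1}^\perp\otimes\R$ has signature $(1,20-k)$, hence contains an open cone of vectors of negative square, and a rational class approximating one of these has negative square while its orthogonal hyperplane still cuts the given open subset of $\DD_{L_{k-1}}$. Be aware that this is not what \cite[Theorem 1.1]{Og} hands you off the shelf: that theorem concerns the density of the Noether-Lefschetz locus of a non-isotrivial family and says nothing about the sign of the class one picks up, so the approximation argument must be made explicitly at each step. Once (a) is supplied your proof is complete and correct; but the paper's version shows that the entire signature discussion, together with the separate CM argument for $i=20$, can be dispensed with by asking only for non-degeneracy of each cutting hyperplane.
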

\begin{proof}
	The result is stated, in an alternative form, in \cite[Application 1.3]{Og}. However, we outline here a direct proof. We find these points recursively, starting from $v_0$. First, notice that, $\chi(v)\leqslant 21$ if and only if there exists a non-zero vector $\aaa \in \Q^{22}$ such that $\aaa \cdot v =0$. Thus, the locus of $v$ such that $\chi(v) \leqslant 21$ is a countable union of hyperplanes in $\PP_\C^{21}$. As $\UU$ is an open subset of the smooth quadric $\QQ$, $\UU$ is not contained in any of them, and therefore there exists a point $v_0 \in \UU$ satisfying $\chi(v_0)=22-0=22$. However, as the union of these hyperplanes is dense in $\PP_\C^{21}$, there exists an $\aaa \in \Q^{22}$ whose corresponding hyperplane $H_\aaa$ cuts $\UU$, and thus any point $v \in \UU \cap H_\aaa$ will have $\chi(v)\leq 21$. Notice that, up to a small adjustment, we may assume that $(\aaa.\aaa)\neq 0$. We focus our attention on the hyperplane $H_\aaa$. The restriction of the form $( \ . \ )$ to $H_\aaa$ is non-degenerate, thanks to the choice of $\aaa$, and therefore the quadric $\QQ \cap H_\aaa$ is smooth. Hence, the argument can be recursively applied to find the aimed points.
\end{proof}

Lemma \ref{lemma.points} gives a recipe to find points with prescribed $\chi$-values; the next result \textit{connects the dots} by means of a holomorphic curve. In order to do so, we will make use of the Lagrange polynomials to interpolate these points. For $i=0,\dots, 20$, consider the Lagrange polynomial
\[
p_i(t) = \frac{ \prod_{j \neq i} (t-j) }{ \prod_{j\neq i} (i-j)},
\]
where the products are taken over $j=0,\dots, 20$. For $k=0,\dots,20$, we have $p_i(k)=\delta_{ik}$. Also, we define $M$ by
\[
M=\max_{i=0,\dots,20} \left( \max_{t \in [0,20]} | p_i(t) | \right).
\]

\begin{lemma}\label{lemma.curve}
	Let $v \in \DD$. For each neighbourhood $\VV$ of $v$ in $\DD$, there exist an open neighbourhood $\Omega$ of the real interval $[0,20]$ in $\C$, and a holomorphic curve $\tau \colon \Omega \to \VV$ such that
	\[
	\chi(\tau(i))=22-i
	\]
	for $i=0,\dots, 20$. The curve can be chosen so that $\tau(i)=v$ for $i=22-\chi(v)$.
\end{lemma}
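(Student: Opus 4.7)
The plan is to reduce to a Lagrange interpolation problem carried out in a local holomorphic chart on the smooth quadric $\QQ$, rather than directly in ambient projective coordinates. The obstacle to a naive approach -- and the reason this detour is needed -- is that a polynomial curve in $\PP(\Lambda\otimes\C)$ built by straight Lagrange interpolation through points of $\QQ$ will in general fail to lie on $\QQ$ between the interpolation points, so its image need not be contained in $\DD$. Working in a Euclidean chart on $\QQ\cap\DD$ side-steps this automatically.

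Since $\QQ$ is smooth of dimension $20$ and $v \in \DD \subseteq \QQ$, I would first pick a biholomorphism $\phi \colon W \to B(0,r) \subseteq \C^{20}$ from an open neighbourhood $W \subseteq \VV$ of $v$ onto a Euclidean ball, with $\phi(v) = 0$. Applying Lemma \ref{lemma.points} to the open subset $\UU = \phi^{-1}(B(0, r/(22M)))$ of $\DD$ then yields points $v_0, \ldots, v_{20} \in \UU$ with $\chi(v_i) = 22 - i$. Setting $k = 22 - \chi(v)$, the point $v$ itself already satisfies $\chi(v) = 22 - k$; since the recursive argument in Lemma \ref{lemma.points} chooses each $v_i$ under constraints that do not involve the other $v_j$, we may simply declare $v_k = v$ at that step, and no modification of the argument is needed.

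Next, I would set $z_i = \phi(v_i) \in B(0, r/(22M))$ and define the $\C^{20}$-valued interpolant
\[
P(t) \;=\; \sum_{i=0}^{20} p_i(t)\, z_i.
\]
For $t \in [0,20]$, the bound $|p_i(t)| \leq M$ gives $\|P(t)\| \leq 21 \cdot M \cdot r/(22M) = 21r/22 < r$, so that $P([0,20]) \subseteq B(0,r)$. By continuity of the polynomial $P$ together with openness of $B(0,r)$, there exists an open neighbourhood $\Omega \subseteq \C$ of the interval $[0,20]$ with $P(\Omega) \subseteq B(0,r)$. Setting $\tau = \phi^{-1} \circ P \colon \Omega \to W \subseteq \VV$ yields a holomorphic curve such that $\tau(i) = \phi^{-1}(z_i) = v_i$, hence $\chi(\tau(i)) = 22 - i$, and in particular $\tau(k) = v$, as required. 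The only quantitative ingredient is the constant $M$, which dictates how tightly the points $v_i$ must be clustered around $v$ to guarantee that the Lagrange curve remains inside the domain of the chart throughout $[0,20]$.
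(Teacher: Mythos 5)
Your proposal is correct and follows essentially the same route as the paper: choose a holomorphic chart on $\QQ$ centred at $v$, apply Lemma \ref{lemma.points} in a sufficiently small ball whose radius is calibrated by the constant $M$, Lagrange-interpolate the chart images of the points, bound the interpolant on $[0,20]$, and pull back via the chart after enlarging to an open neighbourhood $\Omega$ by continuity. The only differences are cosmetic (your normalisation of the radii versus the paper's choice $r=1/(21\cdot M)$).
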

\begin{proof}
	First, consider -- up to shrinking $\VV$ -- a chart $q \colon \VV \to B(0,1) \subseteq \C^{20}$ for which $q(v)=0$. Set $r=1/(21\cdot M)$, and $\UU=q^{-1}(B(0,r))\subseteq \VV$.
	
	\begin{center}
	\includegraphics[scale=0.4]{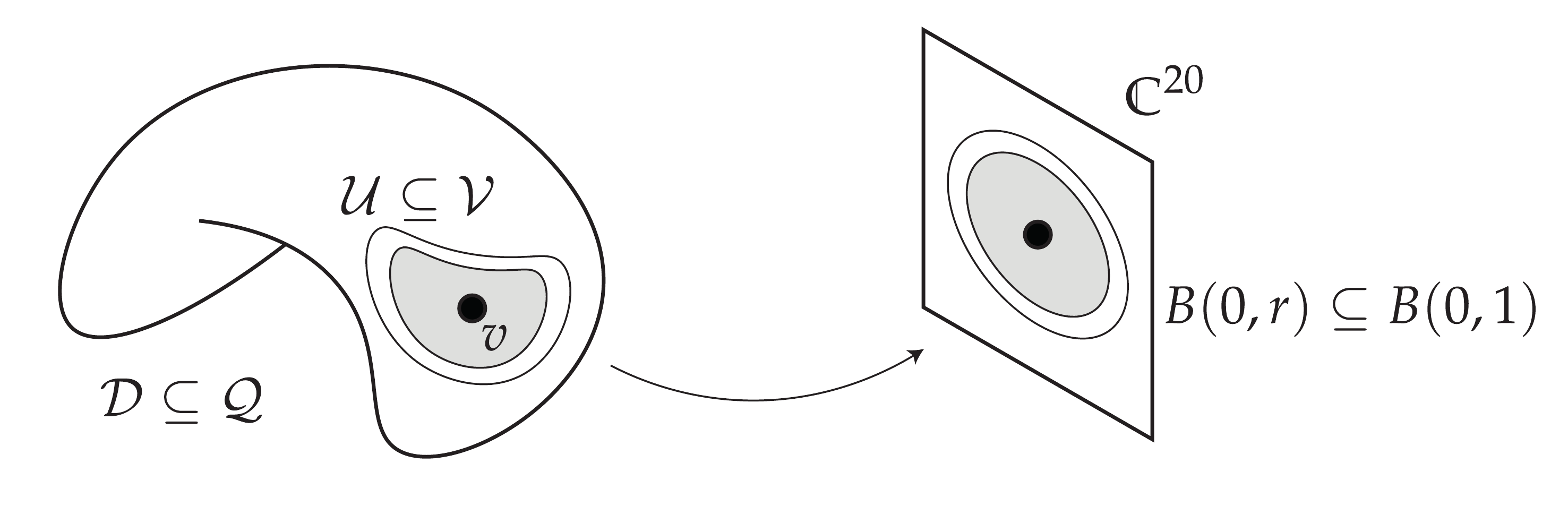}
	\end{center}
	
	Thanks to Lemma \ref{lemma.points}, there exist $21$ points $\{v_i\}_{i=0}^{20}$ in $\UU$ satisfying $\chi(v_i)=22-i$. Of course, we can suppose $v_i=v$ for $i=22-\chi(v)$. Now, consider the function $\tilde{\tau}$ in the complex variable $t$ defined as
	\[
	\tilde{\tau}(t)= \sum_{i=0}^{20} p_i(t) q(v_i).
	\]
	Note that $\tilde{\tau}(i)=q(v_i)$ for $i=0,\dots,20$. Then, if $t$ belongs to the real interval $[0,20]$, we have
	\[
	| \tilde{\tau}(t) | \leqslant \sum_{i=0}^{20} |p_i(t) q(v_i)| < 21 \cdot M \cdot r = 1.
	\]
	\begin{center}
	\includegraphics[scale=0.4]{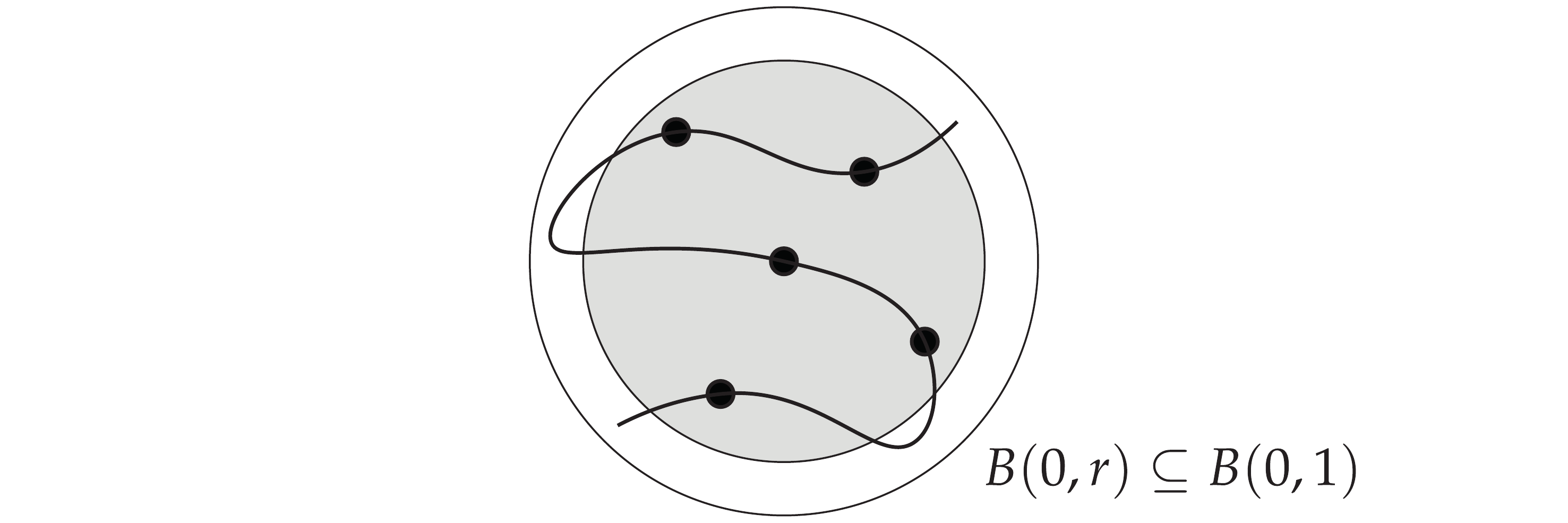}
	\end{center}
	Therefore, there exists an open neighbourhood $\Omega$ of $[0,20]$ in $\C$ such that $\tilde{\tau}(\Omega) \subseteq B(0,1)$. The holomorphic curve $\tau=q^{-1}\circ \tilde{\tau} \colon \Omega \to \VV$ satisfies the requirements.
\end{proof}

We state and prove the announced Theorem \ref{theorem.result0.1}:

\begin{theorem}\label{theorem.result}
	Given a K3 surface $X$, there is a one-dimensional deformation $\mathcal{X}\to\Omega$ of $X$ with the property that any integer $\rho$ with $0\leqslant\rho\leqslant20$ is realised as the Picard number of a fibre.
\end{theorem}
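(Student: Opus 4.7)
The plan is to realise the Hodge-theoretic curve constructed in Lemma \ref{lemma.curve} as the period map of an honest one-parameter deformation of $X$, by pulling back the Kuranishi family of $X$ through the local Torelli theorem.

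First, I would fix a marking $H^{2}(X,\Z) \simeq \Lambda$ on $X$ and consider its Kuranishi family $\pi \colon \mathcal{Y} \to \Def(X)$, equipped with the induced markings on nearby fibres. The base $\Def(X)$ is smooth of dimension $20$, and the local Torelli theorem asserts that the associated period map $\mathcal{P} \colon \Def(X) \to \DD$ is a local biholomorphism at $0$ onto an open neighbourhood of the period point $v$ of $X$. After shrinking $\Def(X)$, I may assume that $\mathcal{P}$ is a biholomorphism onto some open neighbourhood $\VV$ of $v$ in $\DD$.

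I would then apply Lemma \ref{lemma.curve} to this particular $\VV$, obtaining an open neighbourhood $\Omega$ of the real interval $[0,20]$ in $\C$ and a holomorphic curve $\tau \colon \Omega \to \VV$ such that $\chi(\tau(i)) = 22-i$ for $i = 0, \dots, 20$ and $\tau(\rho(X)) = v$ (this is where the clause $\tau(22 - \chi(v)) = v$ in Lemma \ref{lemma.curve} gets used, combined with Lemma \ref{lemma.rhochi} applied to $X$). Defining $\XX \to \Omega$ as the pullback of the Kuranishi family along $\mathcal{P}^{-1} \circ \tau$ would then produce a one-dimensional deformation of $X$: its fibre over $\rho(X) \in \Omega$ is exactly $X$, since $\mathcal{P}^{-1}(v) = 0$, and by construction its period map coincides with $\tau$. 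Another application of Lemma \ref{lemma.rhochi}, this time to each fibre $\XX_{i}$, yields $\rho(\XX_{i}) = 22 - \chi(\tau(i)) = i$ for every integer $i \in \{0, \dots, 20\}$, which is the conclusion.

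The statement is thus an assembly of Lemmas \ref{lemma.rhochi} and \ref{lemma.curve} through the local Torelli correspondence; no new analytic estimate is required beyond those already carried out in the proof of Lemma \ref{lemma.curve}. The only subtlety is to feed Lemma \ref{lemma.curve} precisely the neighbourhood $\VV$ that is the image of the Kuranishi period map, so that the composition $\mathcal{P}^{-1} \circ \tau$ makes sense on all of $\Omega$ and gives a \emph{local} family in the sense of \cite{Og}. The algebraic variant (Theorem \ref{theorem.resultalgebraic}) would presumably follow by the same scheme, replacing $\DD$ by the complement of finitely many hyperplanes dictated by a chosen ample class.
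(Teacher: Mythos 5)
Your proposal is correct and follows essentially the same route as the paper's own proof: apply local Torelli to identify a neighbourhood $\VV$ of the period point with (an open subset of) $\Def(X)$, feed that $\VV$ into Lemma \ref{lemma.curve}, and pull back the Kuranishi family along the resulting curve, with Lemma \ref{lemma.rhochi} translating $\chi$-values into Picard numbers. The extra care you take about which neighbourhood to hand to Lemma \ref{lemma.curve} and about the fibre over $\rho(X)$ being $X$ itself is exactly the content the paper leaves implicit.
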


\begin{center}
	\includegraphics[scale=0.4]{fibration.pdf}
\end{center}

\begin{proof}
	A complex K3 surface $X$ admits a smooth universal deformation $\mathcal{X}\to\text{Def}(X)$ -- the Kuranishi family -- whose base $\text{Def}(X)$ can be taken to be a disk in $\mathbb{C}^{20}$. In particular, the period map $\text{Def}(X)\to\mathcal{D}$ is well defined and, by local Torelli (\cite[Theorem VIII.7.3]{Bar}), a local isomorphism. Choosing a suitable neighbourhood $\VV\subseteq\mathcal{D}$ of the period point of $X$, for which the restriction of the period map is an isomorphism onto its image, we can lift the curve constructed in Lemma \ref{lemma.curve} to $\text{Def}(X)$, thus getting the desired one-dimensional family.
\end{proof}

As already mentioned, a similar result holds for algebraic K3 surfaces.

\begin{theorem}\label{theorem.resultalgebraic}
	Given a algebraic K3 surface $X$, there is a one-dimensional deformation $\mathcal{X}\to\Omega$ of $X$ in algebraic K3 surfaces with the property that any integer $\rho$ with $1\leqslant\rho\leqslant20$ is realised as the Picard number of a fibre.
\end{theorem}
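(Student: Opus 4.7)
The plan is to run the same argument as for Theorem \ref{theorem.result}, but entirely inside the polarised period domain. Fix a primitive class $h \in \Lambda$ with $h^2 = 2d > 0$ corresponding to an ample class on $X$, and consider
\[
\DD_h = \Set{ [\sigma] \in \DD | (\sigma.h) = 0 },
\]
a smooth $19$-dimensional subvariety obtained by cutting $\QQ$ with the rational hyperplane $H_h$ (smoothness is preserved because $h^2 \neq 0$). The condition $(\sigma.h) = 0$ imposes a nontrivial $\Q$-linear relation on the coordinates of every period vector $v \in \DD_h$, so automatically $\chi(v) \leqslant 21$, in accordance with $\rho \geqslant 1$ for algebraic K3 surfaces.

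First I would adapt Lemma \ref{lemma.points} to produce, in any non-empty open set $\UU \subseteq \DD_h$, twenty points $\{v_i\}_{i=1}^{20}$ satisfying $\chi(v_i) = 22 - i$. The recursive construction of that lemma carries over essentially verbatim: at each step we cut with one further rational hyperplane $H_\aaa$, chosen so that $\aaa \notin \Q\cdot h$ and $(\aaa.\aaa) \neq 0$ (after a small perturbation), ensuring that the successive quadrics stay smooth and that $\UU$ is not contained in any of the countably many rational hyperplanes we must avoid. Then I would apply the Lagrange-interpolation argument of Lemma \ref{lemma.curve} using a local chart $q : \VV \to B(0,1) \subseteq \C^{19}$ at the period point $v$ of $X$ and the twenty Lagrange polynomials associated with the nodes $1, \ldots, 20$. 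This produces an open neighbourhood $\Omega$ of $[1,20] \subseteq \C$ and a holomorphic curve $\tau : \Omega \to \VV \subseteq \DD_h$ with $\chi(\tau(i)) = 22 - i$ for $i = 1, \ldots, 20$ and $\tau(j) = v$ at the node $j = 22 - \chi(v)$.

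Finally I would lift $\tau$ to a family of polarised K3 surfaces via the polarised version of local Torelli: the Kuranishi deformation of the pair $(X, h)$ has a smooth $19$-dimensional base $\Def(X,h)$, and its period map $\Def(X,h) \to \DD_h$ is a local isomorphism. Pulling back $\tau$ yields a one-dimensional family $\mathcal{X} \to \Omega$ in which the class $h$ persists as a $(1,1)$-class of positive square $2d$ on every fibre. The main subtle point is exactly this algebraicity: it is handled by the standard criterion that a K3 surface admitting a line bundle of positive self-intersection is automatically projective, which guarantees that every fibre $\XX_t$ is algebraic. By Lemma \ref{lemma.rhochi}, one concludes $\rho(\XX_i) = i$ for $i = 1, \ldots, 20$, as required.
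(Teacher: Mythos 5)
Your proposal is correct and follows essentially the same route as the paper: restrict to the hyperplane section $\DD\cap\ell^\perp$ of the period domain, rerun the point-finding and Lagrange-interpolation lemmas there to get twenty points with $\chi=21,\dots,2$, lift by local Torelli, and deduce projectivity of every fibre from the persistence of the positive-square rational $(1,1)$-class. The only cosmetic difference is that you lift into the polarised Kuranishi space $\Def(X,h)$ rather than into $\Def(X)$ with the observation that the curve stays inside $\ell^\perp$; both are fine.
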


\begin{proof}
	Let $\ell$ be an ample class on $X$. It is easy to rephrase Lemma \ref{lemma.points} and Lemma \ref{lemma.curve} so that $20$ points $\{v_i\}_{i=1}^{20}$ are chosen in an open $\UU$ of $\DD \cap \ell^\perp$, and an analogous curve can be constructed. The proof is then the same as for Theorem \ref{theorem.result}, and the class $\ell$ is ample for each fibre -- being $\ell$ a rational $(1,1)$-class with $(\ell.\ell)>0$ \cite[Theorem IV.6.2]{Bar} --, so that each fibre is algebraic.
\end{proof}

\section{The generic local family}\label{section.vg}

In this section we prove that the \textit{generic} one-dimensional local family of K3 surfaces admits only $0$ and $1$ as Picard numbers of the fibres. 
This result (Theorem \ref{theorem.01rho}) underlines the peculiarity of the construction of Theorem \ref{theorem.result}. \\

By local Torelli (\cite[Theorem VIII.7.3]{Bar}) -- up to shrinking the base -- we can think of a one-dimensional local family of K3 surfaces (as in the statement of Theorem \ref{theorem.result0.2}) as a holomorphic map from the unit disk $\Delta\subset \mathbb{C}$ to the period domain $\mathcal{D}$. By further shrinking $\Delta$ if necessary, we can consider holomorphic maps $\tau: \Delta \to B$, where $B$ is an open ball in $\mathbb{C}^{20}$, and denotes -- by abuse of notation -- both an open subset of $\mathcal{D}$ and its image in $\mathbb{C}^{20}$ via a holomorphic chart of $\mathcal{D}$. We may also assume that such maps are continuously defined on the closed disk $\overline{\Delta}$.  Hence, we are interested in the following space
$$ \mathcal{H} = \operatorname{Hol}(\Delta, B)\cap \mathcal{C}(\overline{\Delta}, B)  $$
of holomorphic maps $\tau: \Delta\to B$ which are holomorphic and continuous on the closed disk $\overline{\Delta}$. \\

Since $\mathcal{H}$ is an open subset of the complete metric space $\operatorname{Hol}(\Delta, \mathbb{C}^{20})\cap \mathcal{C}(\overline{\Delta}, \mathbb{C}^{20})$, endowed with the sup-norm, we have a natural notion of topologically negligible subset of $\mathcal{H}$. 
Namely, we say a subset $S\subset \mathcal{H}$ is \textit{generic} if its complement is meagre, i.e. a countable union of nowhere dense subsets  -- often in the literature co-meagre or residual are used instead. Equivalently, $S$ contains a countable intersection of open dense subsets. Since $\mathcal{H}$ is a Baire space, any such $S$ is dense in $\mathcal{H}$. \\

In this setup, the strategy to prove Theorem \ref{theorem.result0.2} is the following: by Lemma \ref{lemma.rhochi} a curve $\tau\in \mathcal{H}$ corresponds to a one-dimensional local family with only Picard number 0 and 1 if it does not intersect the subset of $B\subset \mathcal{D}$ where $\chi \leq 20$. Hence we first prove that the locus we want to avoid
$$ \{v\in B \mid \chi(v)\leq 20 \} $$
is a countable union of codimension 2 analytic subvarieties of $B$ (Corollary \ref{bad_locus}). Then we prove that the subset of curves in $\mathcal{H}$ hitting only one of such subvarieties is nowhere dense in $\mathcal{H}$ (Lemma \ref{lemma.codim2}).

\begin{lemma}\label{lemma.codim2}
	Let $H_{\mathbf{a}}$ and $H_{\mathbf{b}}$ be two different hyperplanes and $\mathcal{Q}$ a non-singular quadric in $\mathbb{P}_{\mathbb{C}}^{n}$. If $n\geqslant4$, then $H_{\mathbf{a}}\cap H_{\mathbf{b}}\cap\mathcal{Q}$ has codimension two in $\mathcal{Q}$ and its singular locus is a smooth submanifold of $H_{\mathbf{a}}\cap H_{\mathbf{b}}\cap \mathcal{Q}$. 
\end{lemma}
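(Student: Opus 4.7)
The plan is to handle the two assertions separately. For the codimension statement, the intersection $H_{\mathbf{a}}\cap H_{\mathbf{b}}$ is a linear subspace of $\mathbb{P}^{n}_{\mathbb{C}}$ of dimension $n-2$; the first step is to verify that this linear subspace is not contained in $\mathcal{Q}$. This rests on the standard fact that a non-singular quadric in $\mathbb{P}^{n}_{\mathbb{C}}$ contains no linear subspace of dimension greater than $\lfloor (n-1)/2\rfloor$, and for $n\geqslant 4$ this bound is strictly less than $n-2$. It then follows that $H_{\mathbf{a}}\cap H_{\mathbf{b}}\cap \mathcal{Q}$ is a proper hypersurface of $H_{\mathbf{a}}\cap H_{\mathbf{b}}\cong \mathbb{P}^{n-2}$, of dimension $n-3$ and hence of codimension two in $\mathcal{Q}$.

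For the singular locus, I would apply the Jacobian criterion to the three defining equations. Writing $\mathcal{Q}=\{v : v^{T}Av = 0\}$ for a symmetric invertible matrix $A$, the singular locus of $H_{\mathbf{a}}\cap H_{\mathbf{b}}\cap\mathcal{Q}$ is the set of $v\in H_{\mathbf{a}}\cap H_{\mathbf{b}}\cap\mathcal{Q}$ at which the three gradients $\mathbf{a}$, $\mathbf{b}$, $Av$ fail to be linearly independent. Since $\mathbf{a}$ and $\mathbf{b}$ are independent (the hyperplanes are distinct), this is equivalent to $Av\in \mathrm{span}(\mathbf{a},\mathbf{b})$, i.e.\ to $v$ lying in the $2$-dimensional linear subspace $W:=A^{-1}\mathrm{span}(\mathbf{a},\mathbf{b})$.

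Parametrising $W$ by $v=\alpha A^{-1}\mathbf{a}+\beta A^{-1}\mathbf{b}$, the hyperplane conditions $\mathbf{a}\cdot v=\mathbf{b}\cdot v=0$ reduce to a symmetric $2\times 2$ linear system $B(\alpha,\beta)^{T}=0$, whose entries are the pairings $\mathbf{a}^{T}A^{-1}\mathbf{a}$, $\mathbf{a}^{T}A^{-1}\mathbf{b}$, $\mathbf{b}^{T}A^{-1}\mathbf{b}$. The quadric equation is automatic on the resulting locus: since $Av=\alpha\mathbf{a}+\beta\mathbf{b}$, one has $Q(v)=v\cdot(Av)=\alpha(\mathbf{a}\cdot v)+\beta(\mathbf{b}\cdot v)=0$ whenever the hyperplane conditions hold. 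Hence the singular locus equals $\mathbb{P}(\ker B)$, which is empty, a single point, or a projective line according as $\mathrm{rank}(B)$ is $2$, $1$, or $0$ — in all three cases a smooth submanifold of $H_{\mathbf{a}}\cap H_{\mathbf{b}}\cap\mathcal{Q}$. The main subtle step is the faithful translation of the singular-locus condition, via the Jacobian criterion, into the concrete two-dimensional linear-algebraic picture on $W$; once that reduction is in hand, the rank trichotomy for $B$ immediately yields the claim.
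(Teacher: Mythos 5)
Your proof is correct, and it reaches both conclusions by a genuinely different route from the paper's. For the codimension claim, the paper argues by contradiction: if the codimension did not increase, a component of $\mathcal{Q}\cap H_{\mathbf{a}}$ would be contained in $H_{\mathbf{b}}$, so the quadric $\mathcal{Q}\cap H_{\mathbf{a}}$ would factor into two linear forms and have rank at most $2$, contradicting the fact that the restriction of a nondegenerate quadratic form to a hyperplane has rank $n$ or $n-1\geqslant 3$. You instead observe directly that $H_{\mathbf{a}}\cap H_{\mathbf{b}}\cong\mathbb{P}^{n-2}$ cannot lie on $\mathcal{Q}$ because a smooth quadric contains no linear subspace of dimension exceeding $\lfloor (n-1)/2\rfloor<n-2$; this is cleaner and makes the role of the hypothesis $n\geqslant 4$ equally transparent. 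For the singular locus, the paper merely notes that $H_{\mathbf{a}}\cap H_{\mathbf{b}}\cap\mathcal{Q}$ is a quadric in $\mathbb{P}^{n-2}$, whose singular locus is its vertex, a linear subspace contained in the quadric and hence smooth; your Jacobian computation identifies that vertex explicitly as $\mathbb{P}(\ker B)$, with $B$ the Gram matrix of $\mathbf{a},\mathbf{b}$ with respect to $A^{-1}$, which buys the concrete trichotomy (empty, a point, a line) at the cost of some bookkeeping. The only caveat, which applies verbatim to the paper's own argument, is the boundary case $n=4$ with $\mathrm{rank}(B)=0$: there the restricted quadratic form has rank $1$, the intersection is a non-reduced double hyperplane, and the Jacobian criterion declares the entire reduced locus singular even though that locus is a smooth $\mathbb{P}^{1}$; since either reading of ``singular locus'' still produces a linear, hence smooth, subvariety, the statement survives, and the case is in any event irrelevant to the application, where $n=21$.
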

\begin{proof}
	As $\QQ$ is non-singular, the intersection $H_\aaa \cap \QQ$ is of codimension one in $\QQ$. Therefore, the only case to exclude is that the codimension does not increase when considering the intersection with $H_\bbb$. This happens if and only if one of the irreducible components of $\QQ \cap H_\aaa$ is completely contained in $H_\bbb$. In particular, the degree-$2$ polynomial describing $\QQ \cap H_\aaa$ should split as a product of two linear polynomials, one of which corresponding to $\bbb$. Thus, this polynomial would be of the form $(\sum_i b_i x_i)\cdot (\sum_j c_j x_j)$ for some $\ccc \in \C^{n+1}$, with symmetric matrix
	\[
	\frac{\bbb \cdot \ccc^T + \ccc \cdot \bbb^T}{2}.
	\]
	This matrix has rank at most $2$, being the sum of two matrices of rank at most $1$. However, it is easy to show that, if $\QQ$ is a non-singular (i.e., of rank $n+1$) quadric in $\PP^n_\C$, then the rank of $\QQ \cap H_\aaa$ is either $n$ or $n-1$. This is a contradiction, thanks to the hypothesis $n\geqslant 4$. 
	
	The last assertion follows from the fact that $H_{\mathbf{a}}\cap H_{\mathbf{b}}\cap \mathcal{Q}$ is a -- possibly singular -- quadric in $H_{\mathbf{a}}\cap H_{\mathbf{b}}$, and hence its singular locus is a linear subspace of $H_{\mathbf{a}}\cap H_{\mathbf{b}}$ contained in $H_{\mathbf{a}}\cap H_{\mathbf{b}}\cap \mathcal{Q}$, and is hence smooth.
\end{proof}

\begin{corollary}\label{bad_locus}
The locus $\{v\in B \mid \chi(v)\leq 20\}$ is a countable union of codimension 2 analytic subvarieties of $B$, each of which has smooth singular locus.
\end{corollary}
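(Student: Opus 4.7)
The plan is to reformulate the condition $\chi(v) \leq 20$ as a statement about $v$ lying in an intersection of two distinct rational hyperplanes, so that Lemma~\ref{lemma.codim2} can be applied pointwise to each member of a countable family.

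First, I would unpack the definition of $\chi$: since $\chi(v) = \dim_\Q \langle v^1,\dots,v^{22}\rangle_\Q$, the condition $\chi(v) \leq 20$ is equivalent, by rank-nullity applied to the $\Q$-linear map $\Q^{22} \to \C$ sending $\aaa \mapsto \sum_j \aaa_j v^j$, to the existence of two $\Q$-linearly independent vectors $\aaa,\bbb \in \Q^{22}$ satisfying $\aaa \cdot v = 0$ and $\bbb \cdot v = 0$. In projective language, this means $v \in H_\aaa \cap H_\bbb$ for two distinct rational hyperplanes in $\PP^{21}_\C$. Consequently,
\[
\{v \in B \mid \chi(v) \leq 20\} \;=\; \bigcup_{(\aaa,\bbb)} \bigl( B \cap H_\aaa \cap H_\bbb \bigr),
\]
where the union is over (unordered) pairs of $\Q$-linearly independent vectors in $\Q^{22}$, taken up to rescaling. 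Since $\Q^{22}$ is countable, this is a countable union.

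Next, I would invoke Lemma~\ref{lemma.codim2} with $n = 21 \geq 4$ and $\QQ$ the smooth quadric defining $\DD$. Each pair $(\aaa,\bbb)$ yields two distinct hyperplanes $H_\aaa \neq H_\bbb$, so the lemma ensures that $H_\aaa \cap H_\bbb \cap \QQ$ is a codimension $2$ analytic subvariety of $\QQ$ whose singular locus is a smooth submanifold. Intersecting with the open subset $B \subseteq \DD \subseteq \QQ$ preserves the codimension, the analytic structure, and the smoothness of the singular locus; the corollary follows.

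There is no real obstacle here: the content lies entirely in Lemma~\ref{lemma.codim2}, and the only thing to check carefully is the elementary equivalence between $\chi(v) \leq 20$ and membership in an intersection of two rational hyperplanes, which comes directly from the definition of $\chi$ and rank-nullity.
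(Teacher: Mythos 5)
Your proposal is correct and follows essentially the same route as the paper: both reduce $\chi(v)\leq 20$ to membership in $H_{\mathbf{a}}\cap H_{\mathbf{b}}$ for a pair of $\Q$-linearly independent $\mathbf{a},\mathbf{b}\in\Q^{22}$, write the locus as the corresponding countable union, and invoke Lemma \ref{lemma.codim2}. The only difference is that you spell out the rank-nullity justification and the passage to the open subset $B$, which the paper leaves implicit.
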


\begin{proof}
    Note that for $v\in \mathcal{D}$, one has $\chi(v)\leq 20$ if and only if there exists two different hyperplanes $H_\mathbf{a}$ and $H_{\mathbf{b}}$, for some $\mathbf{a},\mathbf{b}\in \mathbb{Q}^{22}\setminus \{0\}$, such that $v\in H_\mathbf{a}\cap H_\mathbf{b}$. Hence
    \begin{equation*} 
    \{v\in B\mid \chi(v)\leq 20\}= \bigcup_{\substack{\mathbf{a}, \mathbf{b}\in \mathbb{Q}^{22} \\ \dim_\mathbb{Q} \langle \mathbf{a},\mathbf{b} \rangle=2 }} (B\cap H_\mathbf{a}\cap H_\mathbf{b}), 
    \end{equation*}
    so that the claim follows from Lemma \ref{lemma.codim2}.
\end{proof}

\begin{lemma}\label{codim2_nowhere_dense}
    Let $M\subset B$ be an analytic subvariety of codimension $2$ and assume that the singular locus $M_{\textnormal{sing}}\subset M$ is a smooth submanifold of $B$. Then the subset
    $$ \mathcal{G}_M = \left\lbrace \tau \in \mathcal{H} \mid \tau ( \overline{\Delta} ) \cap M \neq \emptyset \right\rbrace $$
    given by curves intersecting $M$ is nowhere dense in $\mathcal{H}$.
\end{lemma}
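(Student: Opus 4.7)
The plan is to establish that $\mathcal{G}_M$ is closed in $\mathcal{H}$ and has empty interior; together these imply that $\mathcal{G}_M$ is nowhere dense. The closedness -- or equivalently, the openness of the complement -- is cheap: if $\tau\notin \mathcal{G}_M$, then the compact set $\tau(\overline{\Delta})$ has strictly positive distance $d$ from the closed subset $M\subset B$, and any $\tau'\in\mathcal{H}$ with $\|\tau'-\tau\|_\infty<d$ still avoids $M$.

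For the empty interior, I plan to perturb $\tau$ by a small constant translation $v\in\C^{20}$: set $\tau_v(z):=\tau(z)+v$. Since $\tau(\overline{\Delta})$ is compactly contained in $B$, the map $\tau_v$ lies in $\mathcal{H}$ as soon as $|v|$ is smaller than both $\varepsilon$ and the distance from $\tau(\overline{\Delta})$ to $\partial B$. The set of \emph{bad} translations is
\[
\{\, v \;:\; \tau_v \in \mathcal{G}_M\,\} \;=\; \bigcup_{z\in\overline{\Delta}}\bigl(M-\tau(z)\bigr) \;=\; \Phi\bigl(\overline{\Delta}\times M\bigr),
\]
where $\Phi(z,m):=m-\tau(z)$ is smooth on each stratum $\overline{\Delta}\times M_{\sm}$ and $\overline{\Delta}\times M_{\sing}$, using the hypothesis that $M_{\sing}$ is a smooth submanifold of $B$. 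A dimension count now does the work: the codimension-two hypothesis gives $\dim_\R M_{\sm}=36$, and since $M_{\sing}$ is a proper analytic subset of $M$ its complex dimension is at most $17$, so $\dim_\R M_{\sing}\leqslant 34$. Hence the two pieces of the domain of $\Phi$ have real dimension at most $38$ and $36$ respectively, both strictly less than $\dim_\R\C^{20}=40$. The standard fact that a $C^1$ map from a manifold of strictly smaller dimension has image of Lebesgue measure zero then shows that the bad set has measure zero in $\C^{20}$, and in particular empty interior. Any $v\in B(0,\varepsilon)$ avoiding the bad set produces the sought-after approximation $\tau_v\in\mathcal{H}\setminus\mathcal{G}_M$ with $\|\tau_v-\tau\|_\infty=|v|<\varepsilon$.

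The main obstacle, or rather the place where care is needed, lies in handling the behaviour of $M$ at its singularities: on a singular analytic set, the naive Sard-type image-is-negligible statement fails. This is exactly why the statement of Lemma \ref{lemma.codim2} includes the assertion that $M_{\sing}$ is a smooth submanifold of $B$ -- splitting the domain of $\Phi$ along $M_{\sm}\sqcup M_{\sing}$ keeps the dimension count clean without any stratification or real-analytic Sard machinery, and the bound $38<40$ shows that the codimension-two estimate from Corollary \ref{bad_locus} is just barely sufficient.
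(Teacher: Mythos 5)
Your overall strategy --- closedness plus a single constant-translation perturbation killed by a dimension count --- is sound and genuinely more economical than the paper's argument, which instead runs a five-step chain of approximations (first rescaling the disk, then applying the parametric Transversality Theorem separately to $M_{\mathrm{sing}}$ and to $M\setminus M_{\mathrm{sing}}$). But there is one real gap: the assertion that $\Phi(z,m)=m-\tau(z)$ is smooth on $\overline{\Delta}\times M_{\mathrm{sm}}$ and $\overline{\Delta}\times M_{\mathrm{sing}}$. Elements of $\mathcal{H}=\operatorname{Hol}(\Delta,B)\cap\mathcal{C}(\overline{\Delta},B)$ are only \emph{continuous} on $\partial\Delta$, so $\Phi$ is smooth (indeed, locally Lipschitz in $z$) only on $\Delta\times M$; on $\partial\Delta\times M$ it is merely continuous in $z$, and the principle ``the image of a $C^1$ map from a manifold of strictly smaller dimension is Lebesgue-null'' does not apply there. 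This is not cosmetic: disk-algebra functions can have very wild boundary behaviour (nowhere-differentiable boundary values, for instance), so the piece $\bigcup_{z\in\partial\Delta}\bigl(M-\tau(z)\bigr)$ of your bad set is an uncountable union of null sets over which you have no measure or category control.

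The fix is exactly the manoeuvre the paper performs with its radii $R_n\nearrow 1$: before translating, replace $\tau$ by $\tau_R(z):=\tau(Rz)$ for $R<1$ close to $1$. Uniform continuity of $\tau$ on the compact set $\overline{\Delta}$ gives $\|\tau_R-\tau\|_\infty\to 0$ as $R\to 1^-$, and $\tau_R$ is holomorphic on a disk of radius $1/R>1$, hence real-analytic on all of $\overline{\Delta}$; your dimension count ($38<40$ and $36<40$) then applies verbatim to $\Phi_R(z,m)=m-\tau_R(z)$, and the resulting good translation $v$ yields $\tau_R+v\in\mathcal{H}\setminus\mathcal{G}_M$ arbitrarily close to $\tau$. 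With that one extra step your proof is complete and, arguably, cleaner than the paper's: you dispense with the transversality theorem entirely and use the smoothness of $M_{\mathrm{sing}}$ only to run the measure-zero count on the second stratum.
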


\begin{proof} Note that $\mathcal{G}_M$ is closed as $\overline{\Delta}$ is compact and $M$ is closed, so that it is enough to prove that its complement $\mathcal{H} \setminus \mathcal{G}_M$ is dense. 
Denote by $M_\sm = M\setminus M_\sing$. 
We will prove that the following inclusions are open dense immersions:
\begin{equation*}
    \begin{split}
        \HH\setminus \mathcal{G}_M & = \Set{\tau \in \HH | \tau(\overline{\Delta})\cap M_\sm = \emptyset \ \text{and} \ \tau(\overline{\Delta})\cap M_\sing = \emptyset } \\
        & \subseteq \Set{\tau \in \HH | \tau(\partial \Delta)\cap M_\sm = \emptyset \ \text{and} \ \tau(\overline{\Delta})\cap M_\sing = \emptyset } \\
        & \subseteq \Set{\tau\in \HH | \tau(\overline{\Delta})\cap M_\sing =\emptyset } \\
        & \subseteq \Set{\tau\in \HH | \tau(\partial \Delta)\cap M_\sing =\emptyset } \\
        & \subseteq \Set{\tau\in \HH | \tau(\overline{\Delta}) \nsubseteq M_\sing  } \subseteq \HH.
    \end{split}
\end{equation*}

More explicitly, starting with $\tau \in \HH$, we approximate it step by step with a curve lying in the smaller subset in the above chain of inclusions:
\begin{itemize}
    \item If $\tau(\overline{\Delta})$ is contained in $M_\sing$, then for all $\varepsilon$ small enough the image of the curve $\tau(t) + \varepsilon v$ is not all contained in $M_\sing$, where $v$ is a vector not belonging to the tangent space to $M_\sing$ at a point of $\tau(\Delta)$.
    \item In order to get $\tau(\partial \Delta)\cap M_\sing =\emptyset$, consider $\tau(R_n t)$ in lieu of $\tau$, where $R_n$ is a sequence of radii converging to $1$ from below, for which $\tau$ does not hit $M_\sing$ on the boundary of the disks of radii $R_n$. Notice that the sequence $R_n$ always exists as $\tau$ hits $M_\sing$ only at countably many times in $\Delta$.
    \item Now to get $\tau(\overline{\Delta})\cap M_\sing =\emptyset$, we note that this is equivalent to asking that $\tau$ is transversal to $M_\sing$ (according to \cite[\textsection 1.5]{difftop}), as $M_\sing$ has codimension greater than 1. For this, let $B'$ a sufficiently small ball in $\C^{20}$, so that $\tau(t)+v$ is contained in $B$ for all $t \in \bar{\Delta}$, $v \in B'$. From \cite[Transversality Theorem, \textsection 2.3]{difftop} it follows that the set of $v \in \C^{20}$ of small norm for which $\tau(t) + v$ is not transversal to $M_\sing$ is of measure zero in $B'$. As this set is also closed in $B'$ (by openness of transversality), it is nowhere dense in $B'$. Therefore, we can choose a small $v$ such that the associated curve $\tau(t)+v$ is transversal to $M_\sing$, and hence does not intersect $M_\sing$.
    \item To avoid $\tau$ intersecting $M_\sm$ on the boundary, we argue as done for $M_\sing$. Notice that, as $\tau$ is far from $M_\sing$ already, a small perturbation is far from $M_\sing$ as well.
    \item Finally, in order to get to $\HH\setminus \mathcal{G}_M$, we may assume that every small perturbation of $\tau$ is far from $M_\sing$. And then we argue as done for $M_\sing$ using transversality. \qedhere 
\end{itemize} 
\end{proof}

We are finally ready to prove Theorem \ref{theorem.result0.2}:

\begin{theorem}\label{theorem.01rho}
    The generic one-dimensional local family of K3 surfaces admits only Picard numbers 0 and 1.
\end{theorem}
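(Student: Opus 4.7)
The plan is to assemble the pieces already in place: Lemma \ref{lemma.rhochi} translates the Picard-number condition into a $\chi$-value condition, Corollary \ref{bad_locus} exhibits the bad locus as a countable union of codimension-two analytic subvarieties with smooth singular locus, and Lemma \ref{codim2_nowhere_dense} shows that the curves hitting any such subvariety form a nowhere dense set. A standard Baire-category argument then finishes the theorem.

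More precisely, I would first reduce the statement to a statement about the space $\mathcal{H}$. After shrinking the disk, any one-dimensional local family of K3 surfaces corresponds, via the period map and local Torelli, to some $\tau\in\mathcal{H}$, and by Lemma \ref{lemma.rhochi} the family has all fibres of Picard number $\leqslant 1$ if and only if $\chi(\tau(t))\geqslant 21$ for every $t\in\Delta$, that is, $\tau(\Delta)\cap\{v\in B:\chi(v)\leqslant 20\}=\emptyset$. Writing the bad locus, by Corollary \ref{bad_locus}, as $\bigcup_{i\in\mathbb{N}} M_i$ with each $M_i\subset B$ an analytic subvariety of codimension $2$ with smooth singular locus, this condition becomes $\tau(\Delta)\cap M_i=\emptyset$ for all $i$.

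Next I would invoke Lemma \ref{codim2_nowhere_dense}: for every $i$ the subset $\mathcal{G}_{M_i}\subset\mathcal{H}$ of curves with $\tau(\overline{\Delta})\cap M_i\neq\emptyset$ is nowhere dense. Consequently $\bigcup_{i}\mathcal{G}_{M_i}$ is a meagre subset of $\mathcal{H}$, and its complement
\[
\mathcal{H}\setminus \bigcup_{i\in\mathbb{N}} \mathcal{G}_{M_i}\ =\ \bigl\{\tau\in\mathcal{H}\ \bigm|\ \tau(\overline{\Delta})\cap M_i=\emptyset\ \text{for all } i\bigr\}
\]
is generic in the sense of Section \ref{section.vg}. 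Since every curve in this complement satisfies in particular $\tau(\Delta)\cap M_i=\emptyset$ for all $i$, it corresponds to a one-dimensional local family whose fibres have Picard number only $0$ or $1$. As both values actually occur (Picard number $1$ occurs along the countable dense intersection with hyperplanes cutting $B$, by a Noether--Lefschetz-type argument, and Picard number $0$ generically), the theorem follows.

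The genuinely substantive step, the transversality/perturbation argument, has already been carried out in Lemma \ref{codim2_nowhere_dense}; the only subtlety left for this final statement is the purely formal passage from "nowhere dense for each $i$" to "meagre union", which is immediate from the definition of generic adopted in Section \ref{section.vg}. I therefore expect no new obstacle here: the proof is essentially a Baire-category bookkeeping step on top of Corollary \ref{bad_locus} and Lemma \ref{codim2_nowhere_dense}.
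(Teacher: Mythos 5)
Your proposal is correct and follows essentially the same route as the paper: reduce to the space $\mathcal{H}$ via local Torelli and Lemma \ref{lemma.rhochi}, decompose the bad locus $\{\chi\leqslant 20\}$ using Corollary \ref{bad_locus}, and apply Lemma \ref{codim2_nowhere_dense} to each piece before taking the countable union. The only addition is your closing remark that both values $0$ and $1$ actually occur, which the paper does not bother to verify and which is not needed for the ``only'' statement.
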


\begin{proof}
    As in the beginning of this section, we think of $\mathcal{H}$ as the space parametrising one-dimensional local families of K3 surfaces. Then we only have to prove that the subset
    $$ \{\tau \in \mathcal{H} \mid \chi(\tau(t))\leq 20\} $$
    is a countable union of nowhere dense subsets of $\mathcal{H}$. According to Corollary \ref{bad_locus} and using the notation of Lemma \ref{codim2_nowhere_dense}, we have
    $$   \{\tau \in \mathcal{H} \mid \chi(\tau(t))\leq 20\}= \bigcup_{\substack{\mathbf{a}, \mathbf{b}\in \mathbb{Q}^{22} \\ \dim_\mathbb{Q} \langle \mathbf{a},\mathbf{b} \rangle=2 }} \mathcal{G}_{B\cap H_\mathbf{a}\cap H_\mathbf{b}},  $$
    and hence we can conclude by Corollary \ref{bad_locus} and Lemma \ref{codim2_nowhere_dense}.
\end{proof}

Using the same strategy as in the proof of Theorem \ref{theorem.resultalgebraic}, we can adapt the previous result to local families of algebraic K3 surfaces:

\begin{theorem}
    The generic one-dimensional local family of algebraic K3 surfaces admits only Picard numbers 1 and 2.
\end{theorem}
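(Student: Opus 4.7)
The plan is to mimic the proof of Theorem \ref{theorem.01rho} after restricting to the $\ell^\perp$-slice of the period domain, exactly as Theorem \ref{theorem.resultalgebraic} adapted Theorem \ref{theorem.result} to the algebraic setting. I fix a primitive ample class $\ell \in \Lambda$ on a reference algebraic K3 surface. Since $(\ell.\ell) > 0$, the hyperplane $H_\ell \subset \PP(\Lambda \otimes \C)$ is transverse to the period quadric $\QQ$, so $\QQ \cap H_\ell$ is a non-singular quadric in $H_\ell \simeq \PP^{20}_\C$ and $\DD \cap \ell^\perp$ is a $19$-dimensional open subset of it. Using local Torelli for $\ell$-polarized K3 surfaces together with the openness of ampleness in smooth families (both already exploited in Theorem \ref{theorem.resultalgebraic}), one-dimensional local families of algebraic K3 surfaces polarized by $\ell$ are parametrized by the Baire space
\[
\HH_\ell = \operatorname{Hol}(\Delta, B_\ell) \cap \mathcal{C}(\overline{\Delta}, B_\ell),
\]
where $B_\ell$ is an open ball in $\C^{19}$ identified with a chart of $\DD \cap \ell^\perp$.

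Next, since every $v \in B_\ell$ automatically satisfies $\ell \cdot v = 0$, Lemma \ref{lemma.rhochi} shows that a family has Picard numbers only in $\{1, 2\}$ precisely when $\chi(\tau(t)) \geq 20$ for all $t \in \overline{\Delta}$, that is, when $\tau$ avoids
\[
\bigl\{ v \in B_\ell \,\big|\, \chi(v) \leq 19 \bigr\} \;=\; \bigcup_{\substack{\bbb,\,\ccc \in \Q^{22} \\ \dim_\Q \langle \ell, \bbb, \ccc \rangle = 3}} \bigl(B_\ell \cap H_\bbb \cap H_\ccc\bigr).
\]
Each piece $B_\ell \cap H_\bbb \cap H_\ccc$ sits inside the smooth quadric $\QQ \cap H_\ell$ as its intersection with two distinct hyperplanes of $H_\ell \simeq \PP^{20}_\C$. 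Applying Lemma \ref{lemma.codim2} with $n = 20 \geq 4$, I would conclude that each such piece is a codimension-$2$ analytic subvariety of $\DD \cap \ell^\perp$ with smooth singular locus.

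The proof then concludes exactly as in Theorem \ref{theorem.01rho}: Lemma \ref{codim2_nowhere_dense} shows that, for each piece $M$, the set $\mathcal{G}_M \subset \HH_\ell$ of curves hitting $M$ is nowhere dense; and since $\HH_\ell$ is a Baire space, the countable union of these nowhere dense subsets is meagre in $\HH_\ell$, so its complement is comeagre and hence dense. The main subtlety I anticipate is not the codimension count or the transversality arguments, which transcribe essentially verbatim from the non-polarized case, but rather the clean identification of the ``space of one-dimensional local families of algebraic K3 surfaces'' with $\HH_\ell$ for a fixed ample class: this requires using local Torelli in the polarized setting and the fact that the chosen ample class stays ample on all nearby fibres, both ingredients already handled in the proof of Theorem \ref{theorem.resultalgebraic}.
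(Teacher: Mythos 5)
Your proposal is correct and follows precisely the route the paper intends: the paper's own ``proof'' is the single remark that one adapts Theorem \ref{theorem.01rho} by the same restriction to the slice $\DD\cap\ell^\perp$ used in Theorem \ref{theorem.resultalgebraic}, and your write-up fills in exactly those details (the smooth quadric $\QQ\cap H_\ell\subset\PP^{20}_\C$, the decomposition of $\{\chi\leqslant 19\}$ into pieces $B_\ell\cap H_\bbb\cap H_\ccc$ with $\dim_\Q\langle\ell,\bbb,\ccc\rangle=3$, Lemma \ref{lemma.codim2} with $n=20$, and Lemma \ref{codim2_nowhere_dense} plus Baire category). The subtlety you flag about identifying the space of families with $\HH_\ell$ for a fixed polarization is a genuine point left implicit in the paper, and your treatment of it is the intended one.
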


\newpage
\printbibliography

\end{document}